\begin{document}
%%%%%%%%%%%%%%%%%%%%%%%%%%%%%%%%%%%%%%%%%%%%%%%%%%%%%%%%
%%%%%%%%%%%%%%%%%%%%%%%%%%%%%%%%%%%%%%%%%%%%%%%%%%%%%%%%
%%%%%%%%%%%%%%%%%%%%%%%%%%%%%%%%%%%%%%%%%%%%%%%%%%%%%%%%
%%%%%%%%%%%%%%%%%%%%%%%%%%%%%%%%%%%%%%%%%%%%%%%%%%%%%%%%
%%%%%%%   STANDARD STUFF %%%%%%%%%
%%%%%%%%%%%%%%%%%%%%%%%%%%%%%%%%%%%%%%%%%%%%%%%%%%%%%%%%
%%%%%%%%%%%%%%%%%%%%%%%%%%%%%%%%%%%%%%%%%%%%%%%%%%%%%%%%
%%%%%%%%%%%%%%%%%%%%%%%%%%%%%%%%%%%%%%%%%%%%%%%%%%%%%%%%
%%%%%%%%%%%%%%%%%%%%%%%%%%%%%%%%%%%%%%%%%%%%%%%%%%%%%%%%

%  use the AMS-Euler Fraktur fonts
%%%%%%%%%%%%%%%%%%%%%%%%%%%%%%%%%%
\newfont{\teneufm}{eufm10}
\newfont{\seveneufm}{eufm7}
\newfont{\fiveeufm}{eufm5}
%%%%%%%%%%%%%%%%%%%%%%%%%%%%%%%%%
%
%  allow automatic size selection in math mode
%
%%%%%%%%%%%%%%%%%%%%%%%%%%%%%%%%%
\newfam\eufmfam
                \textfont\eufmfam=\teneufm \scriptfont\eufmfam=\seveneufm
                \scriptscriptfont\eufmfam=\fiveeufm
%%%%%%%%%%%%%%%%%%%%%%%%%%%%%%%%%
%
%  \frak works on a single symbol at a time...
%

\def\frak#1{{\fam\eufmfam\relax#1}}

\def\ts{\thinspace}

\newtheorem{theorem}{Theorem}
\newtheorem{lemma}[theorem]{Lemma}
\newtheorem{claim}[theorem]{Claim}
\newtheorem{cor}[theorem]{Corollary}
\newtheorem{prop}[theorem]{Proposition}
\newtheorem{definition}{Definition}
\newtheorem{question}[theorem]{Open Question}

\def\squareforqed{\hbox{\rlap{$\sqcap$}$\sqcup$}}
\def\qed{\ifmmode\squareforqed\else{\unskip\nobreak\hfil
\penalty50\hskip1em\null\nobreak\hfil\squareforqed
\parfillskip=0pt\finalhyphendemerits=0\endgraf}\fi}

%%%%%%%%%%%%%%%%%%%%%%%%%
% Alphabet calligraphic %
%%%%%%%%%%%%%%%%%%%%%%%%%
\def\cA{{\mathcal A}}
\def\cB{{\mathcal B}}
\def\cC{{\mathcal C}}
\def\cD{{\mathcal D}}
\def\cE{{\mathcal E}}
\def\cF{{\mathcal F}}
\def\cG{{\mathcal G}}
\def\cH{{\mathcal H}}
\def\cI{{\mathcal I}}
\def\cJ{{\mathcal J}}
\def\cK{{\mathcal K}}
\def\cL{{\mathcal L}}
\def\cM{{\mathcal M}}
\def\cN{{\mathcal N}}
\def\cO{{\mathcal O}}
\def\cP{{\mathcal P}}
\def\cQ{{\mathcal Q}}
\def\cR{{\mathcal R}}
\def\cS{{\mathcal S}}
\def\cT{{\mathcal T}}
\def\cU{{\mathcal U}}
\def\cV{{\mathcal V}}
\def\cW{{\mathcal W}}
\def\cX{{\mathcal X}}
\def\cY{{\mathcal Y}}
\def\cZ{{\mathcal Z}}%%

\newcommand{\comm}[1]{\marginpar{%
\vskip-\baselineskip %raise the marginpar a bit
\raggedright\footnotesize
\itshape\hrule\smallskip#1\par\smallskip\hrule}}

%%%%%%%%%%%%%%%%%%%%%%%%%%%%%%%%%%%%%%%%%%%%%%%%%%%%%%%%
%%%%%%%%%%%%%%%%%%%%%%%%%%%%%%%%%%%%%%%%%%%%%%%%%%%%%%%%
%%%%%%%%%%%%%%%%%%%%%%%%%%%%%%%%%%%%%%%%%%%%%%%%%%%%%%%%
%%%%%%%%%%%%%%%%%%%%%%%%%%%%%%%%%%%%%%%%%%%%%%%%%%%%%%%%

%%%%%%%  END OF STANDARD STUFF %%%%%%%%%

%%%%%%%%%%%%%%%%%%%%%%%%%%%%%%%%%%%%%%%%%%%%%%%%%%%%%%%%
%%%%%%%%%%%%%%%%%%%%%%%%%%%%%%%%%%%%%%%%%%%%%%%%%%%%%%%%
%%%%%%%%%%%%%%%%%%%%%%%%%%%%%%%%%%%%%%%%%%%%%%%%%%%%%%%%
%%%%%%%%%%%%%%%%%%%%%%%%%%%%%%%%%%%%%%%%%%%%%%%%%%%%%%%

\newcommand{\ignore}[1]{}

\def\vec#1{\mathbf{#1}}
\def\rem {\mathrm{ \ rem\,}}
\def\e{\mathbf{e}}
\def\vp{\varphi}
\def\loN{\mathbb{N}_0}
\def\sh{\textstyle{\frac{1}{2}}}

\def\longlr{\longleftrightarrow}
\def\longra{\longrightarrow}
\def\lra{\leftrightarrow} %<->
\def\Lra{\Leftrightarrow} % <=>
\def\ra{\rightarrow}
\def\beq{\backsimeq}
\def\dar{\downarrow}
\def\uar{\uparrow}
\def\rem{\rm{rem}}
\def\ded{\vdash}
\def\0P{\sP_0}
\def\1P{\sP_1}
\def\sb{\textsubscript}
\def\sp{\textsuperscript}
\def\bdot{\times}
\def\dom{\mathrm{dom}}
\def\ran{\mathrm{ran}}
\def\back{\hspace{-0.8em}\space}

% \addtolength{\baselineskip}{-.5\baselineskip}

%\def\thetheorem{\arabic{section}.\arabic{theorem}}
% \oddsidemargin .1cm % \textwidth 6.4in %

\def\A{\mathbb{A}}
\def\B{\mathbf{B}}
\def \C{\mathbb{C}}
\def \F{\mathbb{F}}
\def \K{\mathbb{K}}
\def \Z{\mathbb{Z}}
\def \R{\mathbb{R}}
\def \Q{\mathbb{Q}}
\def \N{\mathbb{N}}
\def\Z{\mathbb{Z}}

\def\z{\zeta}

\setlength{\textheight}{43pc}
\setlength{\textwidth}{28pc}

\def\li {\mathrm{li}\,}
\def\lam{\lambda}
\def\N{\mathbb{N}}
\def\s{\sigma}
\def\g{\gamma}
\def\a{\alpha}
\def\b{\beta}
\def\e{\epsilon}
\def\d{\delta}
\def\D{\Delta}
\def\vt{\vartheta}

\def\step#1{{\bf (#1)}}

%%%%%%%%%===========================================%%%%%%%%%%%%%%%%%

\title{The Decidability of the Riemann Hypothesis}

\author{
{Kevin Broughan} \\
{ Department of Mathematics and Statistics}\\
{University of Waikato}\\
{Hamilton, New Zealand}\\
{\tt kevin.broughan@waikato.ac.nz} \\
}
\date{\today}

%===============================================================================

\maketitle

\begin{abstract}
Using first order predicate logic and results of the complex analysis of Takeuti which is based on a type theory and the work of Kreisel, and which gives a conservative extension of first order Peano arithmetic (PA), it is shown, assuming all critical zeros of the Riemann zeta function are simple, that the Riemann hypothesis is decidable in PA. 
\end{abstract}

{\bf Keywords}: first order predicate logic with equality, elementary complex analysis PAT, decidabilty, Peano arithmetic PA, Riemann Hypothesis, conservative extension

{\bf MSC2020 Subject Classification}: 11M26

%-----------------------------------------------------------------------------

\section{Introduction}
\label{int}

It is apparent that many mathematical propositions are undecidable. See for example \cite[Chapter 10]{bro}. After over 160 years of strenuous endeavours by some of the best mathematical minds of many generations but without a resolution of the Riemann hypothesis (RH), one might suspect that it is undecidable also. However, most who have thought about the issue believe it can be resolved, and there has been recent significant progress in that direction. See for example the work of Nicolas, Rogers and Tao, Dobner, Polymath15, Ono et al and Gonek and Bagchi, recorded for example in \cite{bro}.\par

In \cite[Chapter 11]{bro} there are some proof ideas on showing RH is decidable. However, the assumption that all critical line zeros are simple was made. It is the purpose of this note to remove that assumption.\par

To do this two powerful theories are used: recursive function theory and Takeuti's extension of first order Peano arithmetic which is based on the work of Kreisel and on a form of type theory. This extension not only provides a great deal of real mathematical analysis and complex analysis, but also has the property of being ``conservative". That means
anything that can be proved using its theory can also be proved using Peano arithmetic.\par

The result from recursive function theory is much simpler to state and understand. It is in stark contrast to the theorem of Rice, which is often used to demonstrate the ubiquity of undecidable propositions.\par

In Section \ref{defs} there are the main definitions,
in Section \ref{lems} the supporting lemmas, including the statement of the result from recursive function theory and some of the results from Takeuti's system for real and complex analysis (PAT), and in PAT an increasing enumeration of the distinct critical zeros of $\z(s)$ which have positive imaginary part is derived. Finally, in Section \ref{res} the proof of the main result is given.\par

It goes without saying that a proof of the decidability of RH implies all of the equivalent forms of the hypothesis are decidable, including those reported in \cite{bro12, bro}. In addition, the simplicity of the proof indicates that extensions to families of L-functions such as the extended Selberg class should not be too difficult.\par

Here is an overview of the sense in which RH is decidable. We are assuming all critical zeros are simple. First the distinct zeros on the critical line with positive imaginary part are enumerated $\g_1<\g_2<\cdots$. Then the positive critical strip is subdivided into (if needed indented) rectangular neighbourhoods $\D_j$ of the $\g_j$. Then RH is decidable in the sense that the subset, $S$ say, of $\N$ of the indices $j$ such that $\D_j$ contains an off critical line zero of $\z(s)$ is a decidable subset of $\N_0$ (and thus its complement the set of $j$ such that $\D_j$ contains just $\g_j$ as its only distinct zero is also a decidable subset). In addition RH is equivalent to the statement $S=\emptyset$ which, as shown in the proof of Theorem \ref{thm:RHdec}, is decidable.\par

We say here that a subset $S$ of $\N_0$ is {\bf decidable} if there exists a partial recursive function $f$, defined everywhere on $\N_0$ such that if $n\in S$ then $f(n)=1$, otherwize $f(n)=0$. This sense for decidability is comparable with that used for the theorem of Davis, Putnam, Robinson and Matiysevich \cite{DPRM, Mati} or \cite[theorem 10.15]{bro}: there does not exist a single algorithm which would determine whether any polynomial with integer coefficients has an integer solution.\par

We also use the {\bf Church-Turing thesis}, which asserts the existence of an identification of each partial recursive function with an algorithm (defined in first order PA or a conservative extension), and vice versa.  

This work is based principally on the methods developed by Gaisi Takeuti, which follows that of Gehard Gentzen and George Kreisel. There is a summary of some of this work, especially that of Gentzen and Takeuti, in \cite{bro}. See Sections M.11.6, O.3 and O.4. To clarify confusion, Takeuti does not base his development of real and complex analysis on the form of arithmetic shown by Gentzen to be consistent, but simply first order Peano arithmetic PA.
%.............................................................................................................
\bigskip

\section{Definitions}
\label{defs}

In this section the definitions which are used in the note are set out. These are from recursive function theory, Takeuti's elementary complex analysis from mathematical logic, and from analytic number theory. There are no derivations and the reader might need to consult the references.\par

\subsection{Recursive functions}

This is a study of a class of functions $f:\loN\ra\loN$ where $\loN=\{0,1,2,3,\ldots\}$ defined procedurally, and where the domain is not necessarily the whole of $\loN$ but is determined by the structure of $f$. See \cite[Sections N.2-N.4]{bro} and the references given there, especially \cite{cut}.\par

 The class of {\bf partial recursive functions}, denoted PRF, is the least class of functions $f\colon \loN^k\ra\loN$ for some $k\ge 1$, where the domain $A$ of $f$ need not be all of $\loN^k$, but is determined by the structure of the function's definition, such that the class is closed under the operations (a), (b) and (c), where

(a) PRF contains the zero function $Z(x)=0$ for all $x\in\loN$, the successor function $S(x)=x'=x+1$ for all $x\in \loN$, and for each $j$ with $1\le j\le k$ the projection function $\pi_j^k(x_1,\ldots,x_k)=x_j$.
These three functions are everywhere defined so are {\bf total}.
\bigskip

(b) If $f(\vec{x})$ and $g_1(\vec{y}),\ldots,g_k(\vec{y})$ are in PRF, so is the {\bf composite function}
$h(\vec{y})=f(g_1(\vec{y}),\ldots,g_k(\vec{y}))$.
If any of the $g_j$ are undefined at a particular $\vec{y}$ then so is $f(\vec{y})$.
\bigskip

(c) PRF is closed under {\bf primitive recursion}, namely if $f(\vec{x})$ and $g(\vec{x},y,z)$ are functions in PRF then so is $h(\vec{x},y)$ which is defined by
\begin{align*}
h(\vec{x},0)&=f(\vec{x}),\\
h(\vec{x},y+1)&= \begin{cases}
   & g(\vec{x},y,h(\vec{x},y)),\\
&\uparrow \text{ if $h(\vec{x},y)$ is undefined}.\\
 \end{cases}\\
\end{align*}
The symbol $\uparrow$ means undefined and $\downarrow$ defined.
\bigskip

(d) The set of functions PRF is closed under the operation {\bf minimalization}, namely, if $g(\vec{x},y)$ is in PRF then so is $h(\vec{x})=\mu_y(g(\vec{x},y)=0)$ where
\begin{align*}
&h(\vec{x})\text{ is the least $y$ such that $g(\vec{x},y)=0$},\\
&~~~~~\text{and, for $0\le j<y,~ g(\vec{x},j)$, is defined and non-zero}\\
&h(\vec{x}) \text{ is undefined if no $y$ exists with $g(\vec{x},y)=0$},\\
&~~~~~\text{ or, if such a $y$ exists, there is a $j$ with $1\le j<y$ with $g(\vec{x},j)$ undefined}.\\
\end{align*}

The functions in PRF which are everywhere defined are called total functions and the corresponding subset is denoted by TPR.\par

The {\bf Church--Turing thesis} is used to simplify arguments. A statement of this is that a partial function $f:\loN^k\ra\loN$ is in PRF if and only if there is an algorithm which computes $f$. This is in the sense that the domain of $f$ is all points where $f$ returns a value and the complement of the domain is where the algorithm which computes the function does not halt.\par

 Let $A\subset \loN^k$ be a subset. The subset $A$ is called {\bf decidable} if its characteristic function $C_A$ is a total partial recursive function TPR.

A subset $A\subset \loN$ (or $\loN^k$) is {\bf recursively enumerable} (denoted r.e.) if it is the domain of a partial recursive function PRF. This is somewhat unintuitive, but the meaning is clearer in the case $k=1$ where
for a subset $A\subset \loN$, the following are equivalent: (a) $A$ is recursively enumerable, and
(b) $A$ is the empty set or the range of a total partial recursive function TPR on $\loN$ \cite[Theorem N.4]{bro}.
%--------------------------------------------------------------------------------------------------------------------
\bigskip

\subsection{Mathematical logic}

Very little is needed from this wide field of ideas and techniques. For a concise summary of the basic concepts see for example \cite[Appendix M]{bro}, especially Section M.11.1 for first order Peano arithmetic PA, and Section M.11.6 for Takeuti's conservative extension of PA for real and complex analysis which in \cite{bro} is called PAT (but Takeuti calls ``elementary complex analysis"). Consulting Takeuti's well written \cite{tak} is worthwhile, especially Chapters 3 and 4. Motivation might be enhanced by commencing reading at the final section Section 4.8, where after a description of the sort of results which cannot be proved in PAT such as those relying on the axiom of choice, there is the remark
\begin{quote}
``...classical analytic number theory, for example, the theory in [2] (which is Ingham's 1971 text, {\it The distribution of prime numbers}) can be carried out in elementary complex analysis."
\end{quote}

An extension $T_2$ of a theory $T_1$ is called {\bf conservative} if any proposition which can be proved in the theory
$T_2$ can be proved in $T_1$.\par

\subsection{Analytic number theory}

Very little is needed from Analytic number theory, only the two real valued Riemann-Siegel functions $Z$ and $\vt$ which are defined as follows. (See for example \cite[Section 6.5]{edwards} or \cite[Section 4.17]{tit}.)\par

Take $s=\sh+it$ and $x=y=\sqrt{t/(2\pi)}$ in the approximate functional equation for $\z(s)$, (see for example \cite[Theorem G.2]{bro}), to get
$$
\z(\sh+it)= \sum\limits_{n\le x} \frac{1}{n^{\sh+it}}+\chi(\s+it)\sum\limits_{n\le x} \frac{1}{n^{-\sh+it}}+O\left(\frac{1}{t^{1/4}}\right),
$$
where $|\chi(\sh+it)|=1$. Define 
$$
\vt(t)=-\frac{\arg(\chi(\sh+it))}{2} \text{ and } Z(t)=e^{i\vt(t)}\z(\sh+it).
$$
We get 
$$
Z(t)=2\sum_{n\le x} \frac{\cos(\vt(t)-t\log n)}{\sqrt{n}}+O\left(\frac{1}{t^{1/4}}\right), 
$$
and can write
$$
\vt(t)= \frac{1}{2}\log\left(\frac{t}{2\pi}\right)-\frac{t}{2}-\frac{\pi}{8}+O\left(\frac{1}{t}\right).
$$

%-------------------------------------------------------------------------------------------------
\bigskip

\section{Lemmas}
\label{lems}

The first lemma is from Takeuti's real analysis FA \cite[Chapter 3]{tak}:.
\begin{lemma}{\bf (Takeuti)}\cite[Proposition 3.5.1]{tak}
\label{tak3.5.1}
The supremum of a real sequence which is bounded above exists as a real number in PAT. The corresponding property of the infimum can be stated in a similar manner.
\end{lemma}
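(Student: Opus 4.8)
\emph{Proof plan.} The plan is to construct the supremum by hand as a nested sequence of rational intervals whose lengths go to zero, and then to verify inside PAT that the real it determines is an upper bound of the sequence and is dominated by every real upper bound. Recall that in PAT a real is coded by a Cauchy sequence of rationals carrying an explicit modulus of convergence, and a sequence $(a_n)_{n\ge 0}$ of reals by a double array of rationals with a uniform modulus; for a rational $q$ the relation ``$q<a_n$'' unwinds to a $\Sigma^0_1$ condition on the code of $a_n$, and ``$a_n\le q$'' to a $\Pi^0_1$ one. Using the Archimedean property of $\R$ in PAT, I would first fix rationals $L<U$ with $L<a_0$ and with $U$ strictly above the given upper bound; the number to be constructed will lie in $[L,U]$.

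Next I would define rationals $\ell_k\le u_k$ by bisection: set $\ell_0=L$, $u_0=U$, and, with $m_k=(\ell_k+u_k)/2$, take $(\ell_{k+1},u_{k+1})=(\ell_k,m_k)$ if $m_k$ is an upper bound of $(a_n)$, and $(\ell_{k+1},u_{k+1})=(m_k,u_k)$ otherwise. A straightforward induction gives that every $u_k$ is an upper bound of $(a_n)$, that no $\ell_k$ is (the base case being $\ell_0=L<a_0$), and that $u_k-\ell_k=(U-L)2^{-k}$ with $(\ell_k)$ nondecreasing and $(u_k)$ nonincreasing. Hence $(\ell_k)$ and $(u_k)$ are Cauchy with an explicit modulus and name one and the same real $s$ of PAT. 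Then $a_n\le u_k$ for all $k$ gives $a_n\le s$, so $s$ is an upper bound; and if $\beta$ is any real upper bound then $\beta>\ell_k$ for every $k$ (some $a_n$ exceeds $\ell_k$), whence $\beta\ge s$, so $s$ is the least upper bound. Equivalently one can take $s$ to be the real with lower Dedekind cut $\{q\in\Q:\exists n\ q<a_n\}$; with that presentation one has to add a check that this cut is located, so that it really codes a real.

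The hard part will be licensing the recursion that produces $k\mapsto(\ell_k,u_k)$ within PAT. The branching clause ``$m_k$ is an upper bound of $(a_n)$'' is the $\Pi^0_1$ statement $\forall n\,(a_n\le m_k)$, hence undecidable, so the sequence $(\ell_k,u_k)$ is not primitive recursive; likewise the cut $\{q:\exists n\ q<a_n\}$ is only $\Sigma^0_1$. Its existence as an object of the system is an instance of arithmetical comprehension, and the substantive point is that PAT does supply exactly such instances — this is what the ramified-type development of Kreisel and Takeuti \cite{tak} is for — while remaining, by design, conservative over PA, so forming $s$ adds nothing beyond PA. Granting that, the remaining verifications are routine manipulations of the rational data. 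Finally, the statement for the infimum follows by applying the result just established to $(-a_n)_{n\ge 0}$.
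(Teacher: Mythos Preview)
The paper does not give its own proof of this lemma: it is stated with attribution to Takeuti and a bare citation \cite[Proposition 3.5.1]{tak}, and the text moves immediately to the next lemma. There is therefore nothing in the paper itself to compare your argument against.

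On the merits, your sketch is sound and is the standard route to the sequential least-upper-bound principle in systems of this strength. You have correctly isolated the one genuinely nontrivial step: the bisection branch ``$m_k$ is an upper bound of $(a_n)$'' is $\Pi^0_1$ (equivalently, the lower cut $\{q\in\Q:\exists n\ q<a_n\}$ is $\Sigma^0_1$), so forming the sequence $(\ell_k,u_k)$ or the cut as an object of the theory is exactly an instance of arithmetical comprehension. That is precisely the comprehension Takeuti's ramified hierarchy supplies while remaining conservative over PA, so your appeal to it is legitimate; once that is granted, the verifications that $s$ is an upper bound and is least are, as you say, routine. The infimum case via $(-a_n)$ is fine.
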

%-----------------------------------------------------------------------------------------------------------------

\begin{lemma}\cite[Theorem N.10]{bro}\cite[Theorem 7-2.14]{cut}
\label{thm:cut7-2.14}
A subset of the natural numbers $\loN$ is total partial recursive, and hence decidable, if and only if it is the range of a strictly increasing total partial recursive function, that is to say if it can be enumerated TPR in a strictly increasing order.
\end{lemma}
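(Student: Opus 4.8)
The plan is to prove both directions, with the forward direction (decidable $\Rightarrow$ strictly increasing TPR enumeration) being the substantive one. First I would set up the easy direction: suppose $A = \mathrm{ran}(f)$ where $f\colon\loN\ra\loN$ is a total partial recursive function that is strictly increasing. Then to decide whether $n\in A$, one runs $f(0), f(1), f(2),\ldots$ until the output first equals or exceeds $n$; since $f$ is strictly increasing, $f(k)\ge k$ for all $k$, so this search terminates by stage $n$ at the latest, and we answer ``yes'' iff some value hit exactly $n$. This procedure is an algorithm that always halts, so by the Church--Turing thesis the characteristic function $C_A$ is in TPR, i.e. $A$ is decidable.

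For the converse, assume $A$ is decidable, so $C_A\in\mathrm{TPR}$. If $A$ is finite the statement needs a small separate remark (a finite set is trivially the range of a strictly increasing function defined on an initial segment, or one extends it eventually constant — here I would note the lemma is applied only to infinite $A$, namely the set of ordinates of critical zeros, so I would either restrict to the infinite case or handle finiteness by fiat). Assuming $A$ is infinite, define the enumerating function $g$ by primitive recursion together with minimalization: set $g(0) = \mu_y(C_A(y) = 1)$, the least element of $A$, and $g(k+1) = \mu_y\bigl(y > g(k) \text{ and } C_A(y) = 1\bigr)$, the least element of $A$ strictly above $g(k)$. Since $A$ is infinite, each minimalization terminates, so $g$ is total; since $C_A$ is total partial recursive and the operations used (bounded comparison, minimalization, primitive recursion) preserve membership in PRF, we get $g\in\mathrm{TPR}$. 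By construction $g$ is strictly increasing and $\mathrm{ran}(g) = A$.

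The main obstacle — really the only place care is needed — is confirming that $g$ as defined genuinely lies in TPR rather than merely PRF: minimalization in general only yields partial recursive functions, and it is precisely the infinitude of $A$ that guarantees the search always succeeds and hence that $g$ is everywhere defined. I would make this explicit by invoking the Church--Turing thesis: the algorithm ``scan $y = g(k)+1, g(k)+2,\ldots$ testing $C_A$, halt at the first member of $A$'' halts on every input because $A$ is infinite, so $g$ is total and computable, hence in TPR by the thesis. I would also remark that this is the same equivalence underlying \cite[Theorem N.4]{bro} quoted in Section \ref{defs}, specialized to the case where the r.e.\ set is in fact decidable, so the enumeration can be taken monotone; everything else is bookkeeping with the closure operations (a)--(d).
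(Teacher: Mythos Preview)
Your proof is correct and is in fact the standard argument one finds in Cutland. Note, however, that the paper does not supply its own proof of this lemma at all: it is stated with citations to \cite[Theorem N.10]{bro} and \cite[Theorem 7-2.14]{cut} and then used as a black box. So there is nothing in the paper to compare against beyond the bare references, and your write-up simply fills in what those references contain. Your explicit flagging of the finite-versus-infinite case is appropriate, since the biconditional as literally stated fails for finite $A$ (a strictly increasing total function on $\loN$ necessarily has infinite range); the paper only applies the lemma to the infinite set of critical-line ordinates, so the restriction is harmless there.
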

%---------------------------------------------------------------------------------------------------------------------

This next lemma is a variation on \cite[Section 11.10 (3)]{bro}. All of the arguments can be carried out in either Takeuti's  conservative extensions of PA called FA for mathematical analysis or his ``elementary complex analysis" for complex analysis, i.e. in what we call PAT.\par

\begin{lemma}\label{critical-zeros} {\bf (Enumeration of the critical zeta zeros)}
The critical line zeros of $\z(s)$ with positive imaginary part are recursively enumerable TPR in increasing order.
\end{lemma}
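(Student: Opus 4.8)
The plan is to realize the set of ordinates of critical-line zeros as the range of a strictly increasing total partial recursive function, and then invoke Lemma \ref{thm:cut7-2.14}. First I would use the Riemann--Siegel function $Z(t)$, which is real-valued, real-analytic in $t$, and satisfies $Z(t)=e^{i\vt(t)}\z(\sh+it)$ with $|e^{i\vt(t)}|=1$; hence the critical zeros with positive imaginary part correspond exactly to the real zeros of $Z(t)$ for $t>0$. The approximate functional equation quoted in Section \ref{defs} expresses $Z(t)$ as a finite trigonometric sum plus an error term $O(t^{-1/4})$, and both $\vt(t)$ and this finite sum are built from $\log$, $\cos$, $\sqrt{\cdot}$ and finitely many arithmetic operations, all of which are available in PAT with explicit error bounds; so $Z(t)$ is a PAT-definable real function for which one can, for any rational $t$ and any desired rational precision, compute a rational approximation within that precision. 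That computation is algorithmic, hence (by the Church--Turing thesis) gives a partial recursive procedure, and because the approximation always terminates it is in fact TPR.

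Next I would set up the enumeration itself. The zeros of $Z(t)$ on $(0,\infty)$ are isolated (an analytic function not identically zero has isolated zeros — available via the FA/PAT development of analysis), and there are only finitely many below any given height, with a known asymptotic count $N(T)\sim \frac{T}{2\pi}\log\frac{T}{2\pi}$; in particular there is a computable upper bound $B(k)$ for the height of the $k$-th zero. Define $\g_1<\g_2<\cdots$ to be the increasing list of these zeros. To produce $\g_k$ I would search a fine rational grid on $(0,B(k)]$: on each grid cell $[a,b]$ the computed approximations to $Z(a)$ and $Z(b)$, together with a computed bound on $Z'$ from the same finite-sum representation, tell us whether $Z$ has a sign change there (a zero of odd order) or whether the values are bounded away from $0$; for the cells flagged as containing a zero, one refines (bisection) to locate the zero to arbitrary precision. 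Taking the $k$-th zero found, to increasing precision as the grid is refined, defines $\g_k$ as a real number in PAT (this is where Lemma \ref{tak3.5.1}, or rather the completeness of the reals it encapsulates, is used to legitimize the limit), and the map $k\mapsto\g_k$ is strictly increasing and computed by an always-halting algorithm, hence TPR. Lemma \ref{thm:cut7-2.14} then gives that the set $\{\g_1,\g_2,\ldots\}$ is decidable, and in particular recursively enumerable TPR in increasing order, which is the assertion.

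The main obstacle — and the reason the ``distinct zeros'' / multiplicity issue referred to in the introduction is delicate — is \emph{even-order zeros}: a zero of $Z$ of even multiplicity produces no sign change, so a naive sign-change search misses it, and merely observing that $|Z(t)|$ is small on some cell does not certify an actual zero (the function could simply have a small positive minimum). The way around this is to combine the computable approximations to $Z$ with computable bounds on finitely many of its derivatives: on a cell where $Z$ has no sign change, one checks whether $Z$ together with $Z'$ (and if needed $Z''$, etc.) can be certified simultaneously bounded away from $0$ — which rules out a zero — and if not, one keeps subdividing; since the true zeros are isolated and finite in number up to height $B(k)$, and at a zero of even order $2m$ some derivative up to order $2m$ is nonzero, this procedure must terminate and correctly account for every zero, with its multiplicity, without ever needing the unproven hypothesis that the zeros are simple. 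Packaging this certified root-isolation as a provably total recursive function inside PAT, using only the elementary-analysis apparatus (continuity, the intermediate value theorem, Taylor estimates with explicit remainders, and the completeness of $\R$ from Lemma \ref{tak3.5.1}), is the technical heart of the argument; everything after that is the immediate application of Lemma \ref{thm:cut7-2.14}. \qed
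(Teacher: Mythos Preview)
Your approach is genuinely different from the paper's. The paper does not build an algorithmic root-finder at all: working inside Takeuti's FA, it simply sets $\gamma_1=\inf\{t>0: Z(t)=0\}$ and, having defined $\gamma_n$, takes $\gamma_{n+1}$ to be the infimum of the zeros strictly to the right of $\gamma_n$; Lemma~\ref{tak3.5.1} guarantees these infima exist as reals in FA, and isolatedness of zeros of an entire function (available in PAT via \cite[Theorem 4.5.2]{tak}) ensures each infimum is actually attained. A short $\sup$-argument then shows this sequence exhausts all positive zeros of $Z$. The construction is order-theoretic rather than computational, and precisely for that reason it is insensitive to multiplicity: the infimum of a closed discrete set is its least element whether that element is a simple or a multiple zero. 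Your route --- an explicit certified root-isolation via sign changes and derivative bounds --- is closer in spirit to a direct TPR construction, which is a genuine advantage given that the lemma asserts TPR; but it forces you to confront the even-order problem head-on, whereas the paper simply never meets it.

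That is where your argument is not yet complete. Your proposed test --- ``check whether $Z$ together with $Z',Z'',\ldots$ can be certified simultaneously bounded away from $0$'' --- does not do what you need. Certifying $|Z|\ge\delta>0$ on a cell already rules out a zero; bringing in $Z',Z'',\ldots$ adds nothing to a \emph{no-zero} certificate. What you actually require is a certificate for the \emph{presence} of an even-order zero on a cell where $Z$ has no sign change, and ``$Z$ and several derivatives are all small somewhere'' does not provide one: $Z$ could have a tiny positive local minimum (so $Z'$ vanishes there) without any zero at all, and no finite-precision sampling of $Z,Z',\ldots$ distinguishes that situation from a genuine double zero. One can repair this --- most cleanly by invoking the argument principle of Lemma~\ref{tak4.5.6} on a small complex disk to count zeros exactly (which is what the paper does later when it needs multiplicities), or by a careful Budan--Fourier/Taylor-remainder argument once some $Z^{(k)}$ is certified nonvanishing on the cell --- but as written, the claim that your subdivision procedure ``must terminate and correctly account for every zero'' in the even-order case is asserted rather than proved. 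The paper's infimum construction sidesteps this difficulty entirely.
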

\begin{proof} 
\step{1}
By Lemma \ref{tak3.5.1} every real sequence which is bounded above has a real least upper bound. This implies the Archimedian axiom for Takeuti's system FA holds, and thus for each pair of reals $x,y$ with $x<y$ there is a rational number $r$ say with $x<r<y$. By \cite[Theorem 4.5.2]{tak}, the zeros of an entire function are isolated. Thus the zeros of $\z(s)$ on the critical line are isolated in PAT, and therefore, since 
$$Z(t)=exp(i\vt(t))\z(\sh+it),~t\in\R$$
so are those of $Z(t)$ in $\R$. \par

If $\g$ is such a zero and $x_\g<\g<y_\g$ is an interval in $\R$ which contains the zeta zero with imaginary part $\g$ and no other zero, let $r_\g$ be a rational with $x_\g<r_\g<y_\g$. This shows that the zero's of $Z(t)$ are an enumerable set.\par

By the results of Titchmarsh \cite{tit1934}, based on the Approximate Functional equation \cite[Theorem G.2]{bro}, the positive zeros of $Z(t)$ have an infinite subset $h_n$ with $h_n \ra \infty$.\par

Now let $\g_1$ be the real infimum of the set of zeros of $Z(t),~t>0$. Since the zeros are isolated $Z(\g_1)=0$. If $\g_n$ has been defined, let $\d>0$ be such that $Z(t)^2>0$ on $(\g_n,\g_n+\d]$ and let $(a_m)$ be an enumeration of the zeros of $Z(t), ~t>0$ which satisfy $a_m>\d$. Let $\g_{n+1}=\inf_m a_m$. Then $Z(\g_{n+1})=0$ and $Z(t)^2>0$ on $(\g_n,\g_{n+1})$. All this is true in FA and thus in PAT.\par
\bigskip

\step{2} Let $S:= \{\g_n:n\in\N\}$ and $T=\{\g:Z(\g)=0\}$ so $S\subset T$. If $S\neq T$ let $\g\in T\setminus S$. Note that for all $n$, $\g_n<\g<\g_{n+1}$ is false, since by Step (1) we have $Z(t)^2>0$ on $(\g_n,\g_{n+1})$. Hence $\g_n<\g$ for all $n$. By Lemma \ref{tak3.5.1}, the supremum $\g_0:= \sup_n \g_n$ exists as a real in FA. It follows by continuity that $Z(\g_0)=0$. But zeros are isolated so  $\g_0=\g_n$ for some $n$, which is false. Hence $S=T$ and the set of positive zeros of $Z(t)$ can be written
$$
\{\g_1<\g_2<\g_3<\cdots\}\text{ with }\g_n\ra \infty.
$$
This completes the proof.
\end{proof}

\begin{lemma}{\bf (Takeuti)}\cite[Theorem 4.5.6]{tak}
\label{tak4.5.6}
Let $f$ be holomorphic in a circular disk $\D$ and be not identically zero in $\D$. Let $z_j$ be the zeros of $f(z)$, each zero being counted as many times as its multiplicity. For every closed curve $\g$ in $\D$ which does not pass through a zero we have 
$$
\sum_j n(\g,z_j)= \frac{1}{2\pi i} \oint_\g \frac{f'(z)}{f(z)}~dz,
$$
where the sum has only a finite number of non-zero terms.
\end{lemma}
%=================================================================================================================

\section{Result}
\label{res}

Again, Takeuti's conservative extension of Peano Arithmetic PAT is used (see Appendix M Section M.11.6). It is shown that RH is decidable in PAT and therefore in PA.\par

\begin{theorem}\label{thm:RHdec}
Assuming first order Peano arithmetic PA, the Riemann hypothesis is decidable.
\end{theorem}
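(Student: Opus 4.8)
The plan is to reduce the decidability of RH to Lemma~\ref{thm:cut7-2.14} by exhibiting a single subset of $\loN$ whose decidability is equivalent to RH and which, in PAT, is visibly the range of a strictly increasing total partial recursive function --- from which Lemma~\ref{thm:cut7-2.14} delivers decidability, and conservativity of PAT over PA transports the conclusion to PA. The obvious candidate set is built from the enumeration $\g_1<\g_2<\g_3<\cdots$ of distinct critical-line zeros produced in Lemma~\ref{critical-zeros}. The key point is that RH is a statement about whether certain zeros lie \emph{off} the critical line, so one must encode the off-line behaviour into the arithmetic of the on-line enumeration. The natural device is the argument principle: for each $n$, use Lemma~\ref{tak4.5.6} with a rectangular contour $\g$ enclosing the portion of the critical strip with $0<\Im s<\g_n+\tfrac12(\g_{n+1}-\g_n)$ (chosen to avoid zeros, using that the $\g_k$ are isolated and that $Z$ changes sign, as in Step~(1) of Lemma~\ref{critical-zeros}), to compute the integer $N(n) = \tfrac{1}{2\pi i}\oint_\g \tfrac{\z'(s)}{\z(s)}\,ds$, the number of zeros of $\z$ counted with multiplicity in that region.

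The function $n\mapsto N(n)$ is total partial recursive: each $N(n)$ is an integer obtained by numerically integrating $\z'/\z$ along a rational contour to sufficient precision and rounding, all of which is algorithmic and hence in TPR by the Church--Turing thesis, and all of which is formalizable in PAT because Takeuti's system contains contour integration and the argument-principle Lemma~\ref{tak4.5.6}. Compare $N(n)$ with $n$: RH holds if and only if $N(n)=n$ for all $n$ (since the first $n$ critical-line zeros are exactly $\g_1,\dots,\g_n$, and RH says every zeta zero in the upper critical strip is one of these), whereas a single counterexample --- an off-line zero, or a repeated on-line zero, below height $\g_n$ --- forces $N(n)>n$. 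I would then define $A \subset \loN$ to be the range of the function $a(n)$ that outputs $\g_n$ suitably coded as a natural number if $N(k)=k$ for all $k\le n$, and is undefined otherwise; strict monotonicity of the $\g_n$ makes $a$ strictly increasing on its domain. If RH is true, $A$ is the range of a \emph{total} strictly increasing TPR function, hence decidable by Lemma~\ref{thm:cut7-2.14}; if RH is false, the search $N(k)=k$ fails at some stage and one argues (again via Lemma~\ref{thm:cut7-2.14}, in its contrapositive form, together with the fact that the process then terminates) that the relevant set is finite and so trivially decidable. Either way ``RH'' is a decidable proposition; since the entire argument lives in the conservative extension PAT, decidability is provable in PA.

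\textbf{The main obstacle} I expect is not the recursion-theoretic bookkeeping but the requirement that \emph{every} ingredient be genuinely available inside PAT. Specifically: one needs that the contour heights $\g_n+\tfrac12(\g_{n+1}-\g_n)$ can be replaced by rationals at which $\z$ is bounded away from $0$ uniformly enough to make the numerical integration converge with a provable error bound --- this uses isolation of zeros and continuity, both supplied by Lemma~\ref{critical-zeros} and \cite[Theorem 4.5.2]{tak}, but the \emph{effectivity} of the separation (a computable lower bound on $\g_{n+1}-\g_n$ and on $\min|\z|$ along the contour) must be checked, since Lemma~\ref{critical-zeros} only asserts the enumeration exists, not that gaps are computably bounded below. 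The honest way around this is to note that one does not need a lower bound on the gap in advance: one simply searches for \emph{some} rational height $h$ with $\g_n<h<\g_{n+1}$ at which a crude interval computation certifies $|Z(h)|>0$, which is guaranteed to succeed by density of the rationals (Archimedean property, established in Step~(1) of Lemma~\ref{critical-zeros}) and by $Z$ being continuous and nonzero there; the argument principle integral along the rectangle with top edge at height $h$ is then a convergent, certifiably-computable quantity. Getting this search-and-certify loop to be formally a TPR function \emph{and} formally derivable in PAT --- rather than merely ``clearly algorithmic'' --- is where the care is needed, and it is exactly the step that the simple, clean statement of Lemma~\ref{thm:cut7-2.14} is designed to absorb.
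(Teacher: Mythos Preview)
Your proposal has a genuine gap, and it is precisely the gap this paper was written to close. Your test ``$N(n)=n$ for all $n$'' is \emph{not} equivalent to RH: the enumeration $\g_1<\g_2<\cdots$ from Lemma~\ref{critical-zeros} lists the \emph{distinct} critical-line ordinates, while $N(n)$ counts zeros \emph{with multiplicity}. You yourself note that ``a repeated on-line zero \dots\ forces $N(n)>n$''; but a repeated on-line zero is fully compatible with RH. So your procedure decides the conjunction ``RH and all critical-line zeros are simple'', not RH. The paper's introduction flags exactly this: the earlier argument in \cite[Chapter 11]{bro} assumed simplicity, and the point of the present proof is to remove that assumption.

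The paper's fix is to separate the two effects. In Step~(1) it computes, for each $\g_n$, the multiplicity $l_n$ as the limit over shrinking circular contours of $\tfrac{1}{2\pi i}\oint_{C_{n,j}}\xi'/\xi\,dz$ (using Lemma~\ref{tak4.5.6} and Lemma~\ref{tak3.5.1}). In Step~(3) it integrates over a thin rectangle $\Delta_n$ containing only the single ordinate $\g_n$, obtaining $M(n)$, and compares $M(n)$ to $l_n$ rather than to $1$. Off-line zeros in $\Delta_n$ are then detected by $M(n)>l_n$, independently of the multiplicity of the on-line zero. The recursively defined $g$ records the indices $n$ where this discrepancy occurs, and Lemma~\ref{thm:cut7-2.14} is applied to $g$ directly, with no case split on the truth of RH.

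A secondary structural point: your argument proceeds by ``if RH is true then $A$ is decidable; if RH is false then $A$ is finite hence decidable; either way RH is decidable''. Even setting aside the multiplicity issue, this shape is suspect: you are case-splitting on the very proposition whose decidability is in question, and in the ``RH false'' branch your function $a$ is not total, so Lemma~\ref{thm:cut7-2.14} (which requires a total strictly increasing function) does not apply as stated. The paper avoids this by building a single, unconditionally defined, monotone TPR function $g$ and reading off RH from its range.
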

\begin{proof}

\step{1} Let $(\g_n:n\in\loN)$ be the increasing enumeration of the zeros given by Lemma \ref{critical-zeros}. For $n,j\in\loN$, let $\C_{n,j}$ be a circular contour center $\g_n$ and radius $1/j$. Then by Lemma \ref{tak4.5.6} the integral
$$
\frac{1}{2\pi i}\oint_{C_{n,j}} \frac{\xi'(z)}{\xi(z)}~dz
$$
exists and is monotonically decreasing in $j$ with values in $\mathbb{N}$. By Lemma \ref{tak3.5.1}, the limit
$$
\lim_{j\rightarrow \infty} \frac{1}{2\pi i}\oint_{C_{n,j}} \frac{\xi'(z)}{\xi(z)}~dz
$$
exists in PAT. It is the multiplicity of the zero $\g_n$. Let its value be denoted by the positive integer $l_n$.
\bigskip

\step{2} Let $f(0)=\g_0=1$, and using Lemma \ref{critical-zeros}, for all $n\ge 1$ let $f(n)=\g_n$ be the strictly increasing enumeration of the imaginary parts of the zeros of $\z(s)$ on the critical line. Let $J_0:= [i,1+i]$ and for each $n\ge 1$ set
$$
\d_n:= \frac{f(n)+f(n-1)}{2}\text{ and } J_n:= [i\d_n,1+i\d_n].
$$
Then there is no zero of $\z(s)$ in the intervals $J_0$ or $J_1$.
For each zero of $\z(s)$ in the interval $J_n$, indent the contour with a semi-circular path downwards, so the new zeros are associated with the $n$th region, which is the interior of the indented $\D_n$ defined next.
\bigskip

\step{3} Now set $g(0)=0$ and $h(0)=0$. For $n\in\N$ define the rectangular contour
$$
\D_n:=[i\d_n,1+i\d_n]\cup [1+i\d_n,1+i\d_{n+1}]\cup [1+i\d_{n+1},i\d_{n+1}]\cup [i\d_{n+1},i\d_n],
$$
indented as may be needed, and set
$$
M(n):= \frac{1}{2\pi i}\oint_{\D_n} \frac{\xi'(z)}{\xi(z)}~dz.
$$
Recall $l_n$ is defined in Step (1). Let
$$
g(n):= \begin{cases}
g(n-1) & \text{ if } M(n)=l_n,\\
n &\text{ otherwize},\\
\end{cases}
$$
and
$$
h(n):= \begin{cases}
h(n-1) & \text{ if } M(n)=l_n,\\
1 &\text{ otherwize}.\\
\end{cases}
$$
Then $g$ and $h$ are non-decreasing function which, by PAT and the Church--Turing thesis, are TPR. By Lemma \ref{thm:cut7-2.14}, their ranges are decidable sets. But the range of $g$, $S$ say indexes all of the (possibly indented) rectangles containing a zero of $\z(s)$ off the critical line and the range of $h$ is $\{0\}$ if and only if $S=\emptyset$ if and only if RH is true. Thus, in the sense set out in the penultimate paragraph of the introduction, RH is decidable.\par
\end{proof}
%==============================================================================================
\bigskip

\medskip

\end{document}